\newtheorem{theorem}{Theorem}[section]
\newtheorem{corollary}[theorem]{Corollary}
\newtheorem{proposition}[theorem]{Proposition}
\theoremstyle{definition}
\newcommand{\F}{\mathbb{F}}
\def\Tr{\mathop{\rm Tr}\nolimits}
\theoremstyle{remark}
\newtheorem{remark}[theorem]{Remark}
\numberwithin{equation}{section}
\begin{document}

\title{The punctured dodecacode is unique}%
\thanks{In memory of Stefan Dodunekov, on the occasion of his 80th birthday}
\author{Markus Grassl}
\address{International Centre for Theory of Quantum Technologies, University of Gdansk, Gdansk, Poland}
\email{markus.grassl@ug.edu.pl}
\author{Denis Krotov}
\address{Sobolev Institute of Mathematics, Novosibirsk 630090, Russia}
\email{dk@ieee.org}
\author{Lin Sok}
\address{School of Physical and Mathematical Sciences, Nanyang Technological University,
21 Nanyang Link, Singapore 637371, Singapore}
\email{lin.sok@ntu.edu.sg}
\author{Patrick Sol\'e}
\address{I2M (Aix Marseille Univ, CNRS), Marseilles, France}
\email{sole@enst.fr}
\subjclass[2000]{94B25, 05E30}
 \keywords{dodecacode, additive code, trace Hermitian duality, uniformly packed code, completely regular code, Doob graph, strongly regular graph}
\begin{abstract}
The punctured dodecacode is an additive $4$-ary code of length $11$ and distance $5$ which is uniformly packed. We show that a code with the same weight distribution is equivalent to it. This code is also shown to be nonlinear.

We also establish the nonexistence of analogues of  the dodecacode and the punctured dodecacode  in Doob graphs. To that end, we classify two-weight codes of weights $6$ and $8$ in Doob and $4$-ary Hamming graphs of diameter $9$ and the corresponding strongly regular graphs.
\end{abstract}
\maketitle
\section{Introduction}
The dodecacode, introduced for the needs of quantum computing in \cite{CRSS:quantum}, is the most famous additive $\F_4$ code. It is trace Hermitian self-dual. It has length $12$ and distance~$6$. The parameters of possible uniformly packed quaternary codes were found in \cite{BZZ:1974:UPC} to belong to an infinite family of codes of length $\frac{2^{2m+1}+1}{3}$, and codimension $2m+1$. The punctured dodecacode is the case $m=2$ of this family. As a uniformly packed code, it is completely regular \cite{CRCinDRG}, and its coset graph is a distance-regular graph of diameter~$3$ \cite{SKS:drg,graph1024}. In this note, we show that it is unique (not only in the Hamming scheme $H(11,4)$, but also in all schemes $D(m,11-2m)$ with the same algebraic parameters), and not $\F_4$-linear.

The main part of the note consists of two sections. In Section~\ref{s:H},
we show that the punctured dodecacode is unique as an additive $4$-ary length-$11$ code (i.e., a code in $H(11,4)=D(0,11)$)
with given weight distribution. This part includes also some partial classification results for additive $4$-ary codes (Section~\ref{s:part}).
In Section~\ref{s:D}, we establish the nonexistence of such codes in
the Doob graphs $D(m,11-m)$, $0<m\le 5$, and of analogs of the dodecacode in $D(m,12-m)$, $0<m\le 6$.
This part includes also a classification of additive two-weight codes in $D(m,9-m)$, $0\le m\le 4$ (Section~\ref{s:srg}) and the corresponding strongly regular graphs.
The appendix contains the results of the classification up to equivalence of the following codes, which were used for obtaining the results in Section~\ref{s:D}: weight-$\{6,8\}$ codes in Doob graphs $D(m,9-2m)$; additive $4$-ary codes of length~$6$, distance at least~$3$ and size at least~$2^6$.

\section{Codes in Hamming metric space}\label{s:H}
\subsection{Preliminaries}
An additive $\F_4$-code of length $n$ is an additive subgroup of~$\F_4^n$. The Hamming weight is denoted by $wt(.)$. The trace denoted by $\Tr()$, is defined by $\Tr(z)=z+z^2$.
The two inner products we consider here are the Hermitian inner product $$\langle x,y\rangle =\sum_i x_iy_i^2,$$ and the trace Hermitian inner product $$\Tr(\langle x,y\rangle )=\sum_i x_iy_i^2+x_1^2y_i.$$
Thus $\Tr(\langle x,y\rangle )=\langle x,y\rangle +\langle y,x\rangle$. The duals of a code $C$ wrt these two inner products are denoted by $C^\perp$ and $C^{\perp_T}$, respectively.
The dodecacode is an additive $\F_4$ code of length $12$ and binary
dimension $12$ which is self-dual with respect to the trace Hermitian inner product. It is cyclic and puncturing at any coordinate
yields a code $C$ of parameters $(11,2^{12},5)_4$. Its dual weight distribution in Magma notation is
 $$\texttt{[ <0,1>, <6,198>, <8,495>, <10,330> ]}.$$

\subsection{Results}
The proof of the main result is based on trace Hermitian duality. We need a pair of Lemmas.
{\lem \label{l:2.1} If an additive $\F_4$-code $C$ is even, then $C$ is self-orthogonal for the trace Hermitian inner product}

\begin{proof}(see also \cite[Th. 4]{CRSS:quantum})
%

  The semilinearity of the Hermitian inner product implies
  $$
    \langle x+y,x+y\rangle=\langle x,x\rangle +\langle y,y\rangle
    +\langle x,y\rangle +\langle y,x\rangle.
  $$
  Using $\langle x,x\rangle \equiv wt(x) \pmod{2}$ and $\langle x,y\rangle +\langle y,x\rangle=\Tr(\langle x,y\rangle )$
  we obtain (see also eq. (7) in \cite{CRSS:quantum})
  $$
    wt(x+y)\equiv wt(x)+wt(y)+ \Tr(\langle x,y\rangle)  \pmod{2}.
  $$
  If $C$ is even, this implies $\Tr(\langle x,y\rangle)=0$.
\end{proof}

The following is proved in~\cite{DanPar:2006}.
{\lem \label{l:2.2} There is a unique self-dual code of length $11$ and distance~$5$.}

Note that equivalence includes all monomial transformations as well as
conjugation of coordinates.

We are now ready for a characterization of the punctured dodecacode by
its weight distribution. A characterization by the minimum distance is
hopeless in view of the existence of the quadratic residue code of
parameters $[11,6,5]_4$ (but see the next subsection for partial
classification results).

{\theorem The punctured dodecacode is the unique additive code with the dual weight distribution
  $$
  \normalfont\texttt{[ <0,1>, <6,198>, <8,495>, <10,330> ]}.
  $$
}

\begin{proof}
Consider an additive code $C$ of length $11$ with the same weight
distribution as the punctured dodecacode. Its dual, say~$D$, contains
$2^{10}$ codewords. It is even since its weights are $6$,
$8$,~$10$. By Lemma~\ref{l:2.1}, $D$ is trace Hermitian
self-orthogonal. Consider a weight-$5$ codeword $x$ in
$C=D^{\perp_T}$. Such a word exists by the MacWilliams identity. The code $N=D \cup (x+D)$ is additive and self orthogonal of size $2^{11}$, hence self-dual. (Note that $\Tr(\langle x,x\rangle )=0$). It has distance~$5$. By Lemma~\ref{l:2.2}, it is unique, up to equivalence. By construction, $D$ is the even part of~$N$, as an even subcode of size half the size of~$N$. The result follows.
\end{proof}

We could prove the following result by inspection, but a conceptual proof is better. The following was shown to the last author by Jon-Lark Kim.

{\proposition The punctured dodecacode is not $\F_4$-linear. } 
\begin{proof}
The code $D$ of the preceding proof, being even and $\F_4$-linear would be Hermitian self-orthogonal by \cite{MWOSW:78}. But there is no Hermitian self-orthogonal length~$11$ distance~$6$ code over~$\F_4$
by~\cite{BouOst:2005}.
\end{proof}

\subsection{Partial classification results}\label{s:part}
In order to show the uniqueness of the punctured dodecacode, we have also
obtained some partial classification results.

Shortening an additive code over $\F_4$ with parameters
$C=(n,2^{k_2},d)_4$ reduces the dimension at most by two, i.\,e.,
results in a code $C'=(n-1,2^{k_2'},\ge d)_4$ where
$k_2'\in\{k_2-2,k_2-1,k_2\}$. 
In order to find all inequivalent additive codes
$(11,2^{12},5)_4$ over $\F_4$, we start with the code $(5,1,5)_4$
generated by the all-one vector. Then we try to increase the dimension
by one, while the minimum distance is at least~$5$. Having found all
codes $(n,2^k,d\ge 5)_4$ with covering radius $\le 4$, i.\,e., codes
for which the dimension cannot be increased, we add a zero coordinate
to the codes and continue.  We exclude codes whose dimension is too
small to reach $(11,2^{12},5)_4$.

Table \ref{t:d5} shows the number of inequivalent additive codes
$(n,2^k,5)_4$ that we have found.
We only found two codes $(11,2^{12},5)_4$, corresponding to the
punctured dodecacode and the $\F_4$-linear quadratic residue code.
\begin{table}
 \begin{alignat*}{5}
%
\begin{array}{c|*{13}{c}}
  n\backslash k &0 &1 & 2 & 3 & 4 & 5 & 6 &7 &8 &9 & 10 & 11 & 12\\\hline
   5  & 1 & 1 & 1  & 0 \\
   6  & 1 & 2 & 5  & 1  & 0  & 0       \\
   7  & 1 & 3 & 15 & 32 & 43 & 1        & 0       \\
   8  & 1 & 4 & 34 & 322&5114&   26299 & 1579 & 0         & 0    & \\ 
   9  & 1 & 5 & 65 & 1698   & 139546    &          &      & \!\!\!\ge 89116 & 2298 &         0 &  0 \\
  10  & 1 & 6 & 110& 6139   &    &          &      &           &      & \!\!\!\ge 20935 & 37 & 0      & 0\\
  11  & 1 & 7 & 171&    &    &          &      &           &      &           &    & \!\!\!\ge 24 & 2
\end{array}
\end{alignat*}
\caption{The number $N(n,k,5)$ of equivalence classes of additive $(n,2^k,{\ge} 5)_4$ codes. According to our approach, we classify $(n,2^k,{\ge} 5)_4$ codes by lengthening $N(n{-}1,k{-}2,5)+N(n{-}1,k{-}1,5)+N(n{-}1,k,5)$ codes of length~$n-1$.
The notation $\ge N_0$ means that we lengthened only $N(n{-}1,k{-}1,5)+N(n{-}1,k,5)$ codes of size~$2^{k-1}$ and $2^{k}$ and $N_0$ is the number of codes having at least one coordinate with at most two symbols.}\label{t:d5}
\end{table}

There is also a unique cyclic code $(7,2^5,5)_4$ with generator matrix
\begin{alignat*}{5}\def\w{\omega}
  \begin{pmatrix}
    1 &    0 &    \w &  \w^2 &    0 & \w^2 &    1\\
   \w &    0 &     0 &     1 & \w^2 &   \w &    1\\
    0 &    1 &    \w &     1 & \w^2 & \w^2 &   \w\\
    0 &   \w &    \w &  \w^2 &    1 &    0 & \w^2\\
    0 &    0 &     1 &  \w^2 &   \w &    1 &   \w
  \end{pmatrix},
\end{alignat*}
where $\omega^2=\omega+1$. Its weight distribution is
$$
\texttt{[ <0,1>, <5,21>, <6,7>, <7,3> ]}.
$$
Note that the best $\F_4$-linear codes
have parameters $[7,2,5]_4$ and $[7,3,4]_4$. Hence, this code has twice
as many codewords as the largest $\F_4$~linear code of length $7$~and
distance~$5$.

\newcommand\ZZ{\mathbb{Z}}
\section{Codes in Doob metric space}\label{s:D}

\subsection{Preliminaries}
A set $C$ of vertices in a connected regular graph is called a completely regular code with covering radius~$\rho$ and intersection array
$I=\{b_0,$ $\ldots,$ $b_{\rho-1};$ $c_1,$ $\ldots,$ $c_{\rho}\}$
(for short, CR-$\rho$, $I$-CR, or just CR code)
if each vertex in $C^{(i)}$ has exactly
$b_i$ neighbors in $C^{(i+1)}$ and exactly
$c_i$ neighbors in $C^{(i-1)}$, $b_\rho=c_0=0$,
where $C^{(i)}$ is the set of vertices at distance~$i$ from~$C$ and $\rho$ is the largest~$i$ such that $C^{(i)}$ is nonempty.
A connected graph is called \emph{distance-regular}
with intersection array~$I$ if every singleton is an $I$-CR code. Distance-regular graphs of diameter~$2$ are called \emph{strongly regular}.

The Doob graph $D(m,n)$ is the Cartesian product of $m$ copies of the Shrikhande graph on $16$ vertices
and $n$ copies of the complete graph $K_4$ of order~$4$. Usually, for Doob graphs it is implied
that $m>0$, while $D(0,n)$ is the $4$-ary Hamming graph $H(n,4)$.
It is known that $D(m,n)$ is a distance-regular graph with intersection array
$\{3d,3d{-}3,\ldots,3;1,2,\ldots,d\}$, where $d=2m+n$ is its diameter; in particular, all $D(m,n)$ have the same intersection array as $H(2m+n,4)$.
If we associate the vertices of $D(m,n)$
with the elements of the module
$ M = \mathbb{R}_{16}^m \times \F_4^n$ over the Galois ring $\mathbb{R}_{16}=\mathrm{GR}(4^2)$,
then we can define linear codes as submodules of~$ M $
and additive codes as subgroups of the additive group of~$ M $.
In particular, treating $\mathbb{R}_{16}$ as a module over $\ZZ_4$, we see that such additive codes are submodules of $(\ZZ_4^2)^{m} \times (\ZZ_2^2)^{n}$.
In \cite{Kro:perfect-doob}, the concept of additive codes in $D(m,n)$ was further generalized by allowing some of the last $n$ coordinates to have the structure $\ZZ_4$ instead of~$\ZZ_2^2$. The formal definition below is based on that extended concept of additive codes.

We first recall that the \emph{Caley graph} on an Abelian group $G$ with a connecting set $S\subset G\setminus\{\mathrm{Id}\}$, $S=S^{-1}$, is the graph with the vertex set~$G$
where two vertices $a,b\in G$ are adjacent if and only if $b-a\in S$.
Let the \emph{Doob graph}
$D(m,n'+n'')$ be defined as the Cayley graph
on the additive group of the module
$V=(\ZZ_4^2)^m \times (\ZZ_2^2)^{n'} \times \ZZ_4^{n''}$
with connecting set $S=S^*\cup S'\cup S''$,
where $S^*$ ($S'$, $S''$) consists of vectors of~$V$
with one of the first $m$ symbols (the next $n'$ symbols, the last $n''$ symbols, respectively)
lying in $\{01,10,11,03,30,33\}$ ($\{01,10,11\}$, $\{1,2,3\}$, respectively) and all other symbols are zero, $00$ or $0$
(we naturally consider vectors of~$V$
as words of length $m+n'+n''$ where the first $m$ symbols are from $\ZZ_4^2$,
the next $n'$ symbols are from $\ZZ_2^2$, and the last $n''$ symbols are from $\ZZ_4$).
The (Doob) weight $\mathrm{wt}(x)$ of a vector~$x$ in~$V$
is the sum of the weights of all symbols of~$x$, where
$$
\mbox{for $x^*\in \ZZ_4^2$, }
\mathrm{wt}(x^*) = \left\{\begin{array}{rl}
0& \mbox{ if $x^*=00$,} \\
1& \mbox{ if $x^*\in\{01,03,10,30,11,33\}$,} \\
2& \mbox{ otherwise;}
\end{array}\right.
$$
$$
\mbox{for  $x'\in \ZZ_2^2$, }
\mathrm{wt}(x') = \left\{\begin{array}{rl}
0& \mbox{ if $x'=00$,} \\
1& \mbox{ if $x'\in\{01,10,11\}$;}
\end{array}\right.
$$
$$
\mbox{for  $x''\in \ZZ_4$, }
\mathrm{wt}(x'') = \left\{\begin{array}{rl}
0& \mbox{ if $x''=0$,} \\
1& \mbox{ if $x''\in\{1,2,3\}$.}
\end{array}\right.
$$
The (Doob) distance between $x,y\in V$ is defined as $\mathrm{wt}(y-x)$
and coincides with the natural shortest-path distance in the graph $D(m,n'+n'')$.

As in the case of codes in the Hamming space $H(n,4)$, the weight
distributions of a code and its dual are connected by the MacWilliams
transform (the formulas are the same as in $H(2m+n'+n'',4)$). 
However, the duality should be defined properly, with respect to a special inner product~$\langle \cdot,\cdot\rangle_{_-}$ (see \cite[Theorem~6.6]{BesKro:Ch6}):
for
$x^*,y^*\in (\ZZ_4^2)^m$,
$x',y'\in \ZZ_4^{n'}$,
$x'',y''\in (\ZZ_2^2)^{n''}$
\begin{equation}\label{eq:ip}
 \big\langle(x^*,x',x''),(y^*,y',y'')\big\rangle_-
=\sum_{i=1}^{m}
(x^*_{2i-1}y^*_{2i-1}-x^*_{2i}y^*_{2i})
+2\cdot\!\!\sum_{i=1}^{2n'}x'_{i}y'_{i}
+\!\!\sum_{i=1}^{n''}x''_{i}y''_{i}.
\end{equation}
In particular, similarly to the codes in $H(n,q)$, a code with distance at least~$3$ is CR-$2$ (completely regular with covering radius~$2$) if and only if its dual has two nonzero weights.


\subsection{Two-weight codes of size 64}\label{s:srg}
Our first aim is to classify additive codes of size~$64$ in $D(m,n'+n'')$, $2m+n'+n''=9$, with non-zero weights $6$ and~$8$ and dual distance at least~$3$.
The last condition means that either $m=n''=0$ or the code has at least one element $z$
of order~$4$, which can only be possible if $2m+n''\ge 6$ (otherwise, $0<\mathrm{wt}(2z)<6$).
In Appendix~\ref{s:all}, we list all, up to equivalence, additive weight-$\{6,8\}$ (two-weight with weights $6$ and~$8$ and one-weight with weight $6$ or~$8$) codes in Doob graphs $D(m,n'+n'')$ of diameter $2m+(n'+n'')=9$ such that $2m+n''\ge 6$ and in $D(0,9+0)=H(9,4)$.
The approach of the classification is rather straightforward.
We start with the collection consisting of one trivial code having
only the all-zero codeword. For each code $C$ from the collection, we
try to continue it by adding a new element $c$ to the generator set in
all possible ways and checking if the resulting code
$\mathrm{span}(C\cup\{c\})$ has the required weights and is not equivalent to any code from our collection.
If it is a new weight-$\{6,8\}$ code, we add it to the collection.
When all codes from the collection are checked for continuation, the process stops.

In Tables~\ref{t:410}, \ref{t:303}, \ref{t:090}, as a summary, we list only weight-$\{6,8\}$ codes of size~$64$. The following proposition summarizes the computational results.

\begin{proposition}
 In $D(m,n'+n'')$, $2m+n'+n''=9$,
there are exactly $26$ equivalence classes of weight-$\{6,8\}$ codes of size~$64$,
$B_1$, \ldots, $B_6$ in $D(4,1+0)$ (Table~\ref{t:410}),
$C_1$, \ldots, $C_8$ in $D(3,0+3)$ (Table~\ref{t:303}),
$D_1$, \ldots, $D_8$ in $D(0,9+0)=H(9,4)$ (Table~\ref{t:090}).
The weight distribution of all these codes is $\{0^{1}, 6^{36},
8^{27}\}$; the dual codes (with respect to the inner product~\eqref{eq:ip}) are completely regular with intersection array $\{27,16;1,12\}$; the coset graphs of the dual codes are strongly regular with parameters
$(64, 27, 10, 12)$;
these strongly regular graphs belong to only $7$ equivalence classes, corresponding to the groups
$\{B_1,C_2,D_2,D_6\}$,
$\{B_2,B_3,C_1,C_3,C_8,D_1,D_5\}$,
$\{B_4\}$,
$\{B_5,C_4,C_6\}$,
$\{B_6,C_5,C_7\}$,
$\{D_3,D_4,D_7\}$,
$\{D_8\}$ of codes.
\end{proposition}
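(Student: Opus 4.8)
The proposition records an exhaustive computer classification, so the plan is to enumerate the codes, verify the claimed numerical invariants by hand, and then sort the associated strongly regular graphs by isomorphism. For the enumeration I would list, up to equivalence, all additive codes of size $64$ whose nonzero weights lie in $\{6,8\}$ in every ambient graph $D(m,n'+n'')$ with $2m+n'+n''=9$ in which such a code can have dual distance at least $3$; as noted above this forces $m=n''=0$ or $2m+n''\ge 6$, since otherwise a nonzero element $z$ of order $4$ — needed for dual distance at least $3$ once $m$ or $n''$ is positive — would satisfy $0<\mathrm{wt}(2z)<6$. The search is the incremental generator-adding procedure already described: start from the zero code, repeatedly adjoin an admissible generator in all ways, discard a partial code as soon as it acquires a nonzero weight outside $\{6,8\}$, and reject isomorphic copies through a canonical form under the automorphism group of the ambient Doob graph (coordinate permutations, coordinatewise symbol transformations, and automorphisms of the Shrikhande factors). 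When the search closes, the codes of size exactly $64$ turn out to occur only in $D(4,1+0)$, $D(3,0+3)$, and $D(0,9+0)=H(9,4)$; these are the $B_i$, $C_i$, $D_i$ of Tables~\ref{t:410}, \ref{t:303}, \ref{t:090}, given in full in Appendix~\ref{s:all}.

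I would then pin down the weight distribution without consulting the list. Any size-$64$ additive code $C$ with only weights $6$ and $8$ and dual distance at least $3$ has $B_0=1$ and $B_6+B_8=63$; moreover, since the MacWilliams transform in $D(m,n'+n'')$ of diameter $9$ agrees with the one in $H(9,4)$, vanishing of the weight-$1$ coefficient of the dual gives $\sum_i B_i K_1(i)=0$ with $K_1(i)=27-4i$, hence $\sum_i i B_i=432$ and so $6B_6+8B_8=432$. These two equations force $B_6=36$ and $B_8=27$, giving the weight distribution $\{0^{1},6^{36},8^{27}\}$ for every code in the list.

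Now the duals. Fix $C$ in the list. Its dual $C^{\perp}$ (with respect to \eqref{eq:ip}) has minimum distance at least $3$ — this is exactly the dual-distance hypothesis on $C$ — and the dual of $C^{\perp}$ is $C$ itself, a two-weight code by the previous step; so the criterion recalled in Section~\ref{s:D} shows that $C^{\perp}$ is completely regular with covering radius $2$. Its coset graph has as vertices the $4^{9}/4^{6}=64$ cosets, two of them being adjacent iff their difference contains a weight-$1$ vector (distinct weight-$1$ vectors lie in distinct cosets, since $C^{\perp}$ has no nonzero word of weight at most $2$), so this graph is $27$-regular, and by the standard duality for completely regular codes its eigenvalues are the Krawtchouk values $K_1(w)=27-4w$ taken at the weights $w$ of $C$, with multiplicity $B_w$: namely $27$ once, $3$ thirty-six times, and $-5$ twenty-seven times. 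Hence the coset graph is strongly regular with parameters $(v,k,\lambda,\mu)=(64,27,10,12)$, and $C^{\perp}$ has intersection array $\{27,16;1,12\}$.

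Finally I would construct, for each code in the list, its $64$-vertex $27$-regular coset graph explicitly and test these graphs pairwise for isomorphism by canonical labelling; the outcome is exactly $7$ isomorphism classes, partitioned among the codes as stated. I expect the main obstacle to be keeping the enumeration of the first step within reach: the intermediate collections grow large (compare the $\ge 89116$ entry of Table~\ref{t:d5} for the analogous length-$11$ problem), so efficient isomorph rejection under the Doob automorphism group — larger and more intricate than the monomial group of $H(n,4)$ because of the Shrikhande factors — is essential; a minor further point is to confirm that the counts do not depend on the search order, nor on which of the last $n'+n''$ coordinates carry the $\ZZ_4$ rather than the $\ZZ_2^2$ structure.
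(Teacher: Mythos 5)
Your proposal is correct and follows essentially the same route as the paper: the classification itself is the incremental generator-adding search with isomorph rejection over the admissible ambient graphs (those with $m=n''=0$ or $2m+n''\ge 6$) described just before the proposition, and the grouping into seven strongly regular graphs is obtained by explicit isomorphism testing of the coset graphs. The only difference is that you derive the weight distribution $\{0^1,6^{36},8^{27}\}$ and the parameters $(64,27,10,12)$ and $\{27,16;1,12\}$ by hand from the MacWilliams/Krawtchouk identities and the CR-$2$ criterion rather than reading them off the computation; this is a correct and welcome, but inessential, supplement to what the paper records as computational output.
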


\begin{table}
$$ B_1: \left( \begin{array}{cccc|c|}
21&10&10&10&10\\
31&01&01&01&01\\ \hline
20&02&20&00&00\\
02&22&02&00&00\\
\end{array} \right),
\qquad
  B_2: \left( \begin{array}{cccc|c|}
21&10&10&10&10\\
12&11&01&01&01\\ \hline
20&02&20&00&00\\
22&20&02&00&00\\
\end{array} \right),$$
$$ B_3: \left( \begin{array}{cccc|c|}
21&21&10&00&10\\
20&10&01&10&01\\
12&01&01&01&01\\ \hline
\end{array} \right),
\qquad
 B_4: \left( \begin{array}{cccc|c|}
21&21&10&00&10\\
20&10&01&10&01\\
13&01&01&01&11\\ \hline
\end{array} \right),$$
$$ B_5: \left( \begin{array}{cccc|c|}
21&21&10&00&10\\
20&10&01&10&01\\
32&03&01&01&10\\ \hline
\end{array} \right),
\qquad
  B_6: \left( \begin{array}{cccc|c|}
21&21&10&00&10\\
31&20&01&10&00\\
21&33&01&01&01\\ \hline
\end{array} \right).$$
 \caption{Generator matrices of additive weight-$\{6,8\}$ codes of
   type $\ZZ_4^2\ZZ_2^2$ ($B_1$, $B_2$) and type $\ZZ_4^3$ ($B_3$,
   $B_4$, $B_5$, $B_6$) in $D(4,1+0)$. Here and further, the two vertical lines in the matrix separate the three groups of coordinates (one or two of which can be empty), while the horizontal line separates the rows of order $4$ and rows of order-$2$}\label{t:410}
\end{table}

\begin{table}
$$ C_1: \left( \begin{array}{ccc||ccc}
21&10&10&1&1&0\\
12&01&01&1&0&1\\ \hline
22&02&20&0&0&0\\
02&22&02&0&0&0\\
\end{array} \right),
\qquad
 C_2: \left( \begin{array}{ccc||ccc}
21&10&10&1&1&0\\
10&21&01&1&0&1\\ \hline
22&02&20&0&0&0\\
02&22&02&0&0&0\\
\end{array} \right),$$
$$ C_3: \left( \begin{array}{ccc||ccc}
21&21&21&0&0&0\\
31&31&00&1&1&0\\
02&10&10&1&0&1\\ \hline
\end{array} \right),
\qquad
 C_4: \left( \begin{array}{ccc||ccc}
21&21&21&0&0&0\\
31&31&00&1&1&0\\
22&30&10&1&0&1\\ \hline
\end{array} \right)$$
$$ C_5: \left( \begin{array}{ccc||ccc}
21&21&21&0&0&0\\
30&20&10&1&1&0\\
33&11&20&1&0&1\\ \hline
\end{array} \right),
\qquad
  C_6: \left( \begin{array}{ccc||ccc}
21&21&10&2&0&0\\
20&10&01&1&1&0\\
12&01&01&1&0&1\\ \hline
\end{array} \right),$$
$$ C_7: \left( \begin{array}{ccc||ccc}
21&21&10&2&0&0\\
20&10&01&1&1&0\\
33&20&11&1&0&1\\ \hline
\end{array} \right),
\qquad
 C_8: \left( \begin{array}{ccc||ccc}
21&21&10&2&0&0\\
21&30&01&1&1&0\\
30&02&01&1&0&1\\ \hline
\end{array} \right).$$
 \caption{Generator matrices of additive weight-$\{6,8\}$ codes of type $\ZZ_4^2\ZZ_2^2$ ($C_1$, $C_2$) and type $\ZZ_4^3$ ($C_3$, $C_4$, $C_5$, $C_6$, $C_7$, $C_8$) in $D(3,0+3)$}\label{t:303}
\end{table}

\begin{table}
\small
$$ D_1: \left( \begin{array}{@{\ }c@{\,}c@{\,}@{\,}c@{\,}c@{\,}c@{\,}c@{\,}c@{\,}c@{\,}c@{\ }}
10&10&10&10&10&10&00&00&00\\
01&01&01&01&01&01&00&00&00\\
01&01&10&10&00&00&10&10&00\\
11&11&01&01&00&00&01&01&00\\
11&01&01&00&11&00&10&00&10\\
10&11&11&00&10&00&01&00&01\\
 \end{array} \right),
\quad
 D_2: \left( \begin{array}{@{\ }c@{\,}c@{\,}@{\,}c@{\,}c@{\,}c@{\,}c@{\,}c@{\,}c@{\,}c@{\ }}
10&10&10&10&10&10&00&00&00\\
01&01&01&01&01&01&00&00&00\\
01&01&10&10&00&00&10&10&00\\
11&11&01&01&00&00&01&01&00\\
11&01&01&00&11&00&10&00&10\\
01&00&01&10&10&00&01&00&01\\
 \end{array} \right),$$
$$ D_3: \left( \begin{array}{@{\ }c@{\,}c@{\,}@{\,}c@{\,}c@{\,}c@{\,}c@{\,}c@{\,}c@{\,}c@{\ }}
10&10&10&10&10&10&00&00&00\\
01&01&01&01&01&01&00&00&00\\
01&01&10&10&00&00&10&10&00\\
01&10&01&00&10&00&01&01&00\\
01&11&11&00&01&00&10&00&10\\
10&10&00&01&01&00&01&00&01\\
 \end{array} \right),
\quad
 D_4: \left( \begin{array}{@{\ }c@{\,}c@{\,}@{\,}c@{\,}c@{\,}c@{\,}c@{\,}c@{\,}c@{\,}c@{\ }}
10&10&10&10&10&10&00&00&00\\
01&01&01&01&01&01&00&00&00\\
01&01&10&10&00&00&10&10&00\\
01&10&01&00&10&00&01&01&00\\
01&11&11&00&01&00&10&00&10\\
11&00&01&11&01&00&01&00&01\\
 \end{array} \right),$$
$$ D_5: \left( \begin{array}{@{\ }c@{\,}c@{\,}@{\,}c@{\,}c@{\,}c@{\,}c@{\,}c@{\,}c@{\,}c@{\ }}
10&10&10&10&10&10&00&00&00\\
01&01&01&01&01&01&00&00&00\\
01&01&10&10&00&00&10&10&00\\
11&01&01&00&11&00&01&01&00\\
11&11&01&01&00&00&10&00&10\\
10&11&11&00&10&00&01&00&01\\
 \end{array} \right),
\quad
 D_6: \left( \begin{array}{@{\ }c@{\,}c@{\,}@{\,}c@{\,}c@{\,}c@{\,}c@{\,}c@{\,}c@{\,}c@{\ }}
10&10&10&10&10&10&00&00&00\\
01&01&01&01&01&01&00&00&00\\
01&01&10&10&00&00&10&10&00\\
11&01&01&00&11&00&01&01&00\\
11&11&01&01&00&00&10&00&10\\
01&00&01&10&10&00&01&00&01\\
 \end{array} \right),$$
$$ D_7: \left( \begin{array}{@{\ }c@{\,}c@{\,}@{\,}c@{\,}c@{\,}c@{\,}c@{\,}c@{\,}c@{\,}c@{\ }}
10&10&10&10&10&10&00&00&00\\
01&01&01&01&10&00&10&00&00\\
11&11&01&10&01&00&00&10&00\\
11&01&01&00&00&01&01&01&00\\
10&01&11&01&00&01&00&00&10\\
11&00&01&11&11&00&01&00&01\\
 \end{array} \right),
\quad
 D_8: \left( \begin{array}{@{\ }c@{\,}c@{\,}@{\,}c@{\,}c@{\,}c@{\,}c@{\,}c@{\,}c@{\,}c@{\ }}
10&10&10&10&10&10&00&00&00\\
01&01&01&01&10&00&10&00&00\\
11&11&01&10&01&00&00&10&00\\
11&01&01&00&00&01&01&01&00\\
10&11&00&11&01&01&00&00&10\\
00&11&11&01&01&00&01&00&01\\
 \end{array} \right).$$
 \caption{Generator matrices of additive weight-$\{6,8\}$ codes  in $D(0,9+0)=H(9,4)$}\label{t:090}
\end{table}

\begin{remark}
 By construction, the graphs from codes of group type $\ZZ_4^i
 \ZZ_2^j$ are strongly regular Cayley graphs over $\ZZ_4^i \ZZ_2^j$
 (in our case, $(i,j)$ is $(0,6)$, $(2,2)$, or $(3,0)$); however, we
 cannot say that we have found all such graphs. The reason is that
 being representable as a coset graph of a code in a Doob graph or a Hamming graph $H(n,4)$ implies certain relations between vectors from the connecting set of a Cayley graph. More strongly regular Cayley graphs with the same parameters can hypothetically
be constructed as coset graphs of binary linear of $\ZZ_2\ZZ_4$-linear two-weight codes.
\end{remark}

\subsection{Lengthening to codes in Doob graphs of diameter 11 and 12}
\label{s:D11}
We aim to classify all additive codes of size $1024$ with nonzero weights $6$, $8$, $10$ in Doob graphs $D(m,n'+n'')$, $m>0$, of diameter~$2m+n'+n''=11$.
If such a code $C_{11}$ exists, then shortening it in one of the first $m$ positions or two of the last $n'+n''$ positions leads to a weight-$\{6,8\}$ code $C_{9}$ of size~$64$ in $D(m-1,n'+n'')$,
 $D(m,(n'-2)+n'')$ (if $n'\ge2$),  $D(m,(n'-1)+(n''-1))$ (if $n',n''\ge1$), or $D(m,n'+(n''-2))$ (if $n''\ge2$). As such codes $C_{9}$ exist only in $D(4,1+0)$ and $D(3,0+3)$, we conclude that the only option for $C_{11}$ is $D(m,n'+n'')=D(5,1+0)$. Now, our approach is quite straightforward. Starting with one of the six weight-$\{6,8\}$ codes found in $D(4,1+0)$, we extend the code to $D(5,1+0)$ by adding $00$ to the beginning of each codeword. Next, we add a row to the generator matrix in all possible ways and check that the resulting code has a larger cardinality ($128$ or $256$ in the first iteration) and desired nonzero weights $6$, $8$, $10$. After filtering only inequivalent representatives from the found collection of codes, we repeat the same procedure and repeat iterating until no new codes are found. Summarizing the results, we have found only codes of size~$128$ and~$256$,
 with weight distributions
 $\{0^1,6^{42},8^{55},10^{30}\}$ and
 $\{0^1,6^{54},8^{111},10^{90}\}$, respectively; as a corollary, we have Proposition~\ref{p:doob11} below. The check matrices of inequivalent representatives are given in Tables~\ref{t:32} and~\ref{t:42}
 (note that we do not claim that there are no other additive weight-$\{6,8,10\}$ codes of size at least~$128$ in~$D(5,1)$; our search was restricted only to those codes that have a shortening of size~$64$ in~$D(4,1)$).

\begin{table}
$$ \left( \begin{array}{@{\,}c@{\,}c@{\,}c@{\,}c@{\,}c@{\,}|@{\,}c@{\,}|}
00&21&21&10&00&10\\
00&20&10&01&10&01\\
00&12&01&01&01&01\\
\hline 20&22&20&00&00&00\\
\end{array} \right),
\quad
 \left( \begin{array}{@{\,}c@{\,}c@{\,}c@{\,}c@{\,}c@{\,}|@{\,}c@{\,}|}
00&21&21&10&00&10\\
00&20&10&01&10&01\\
00&13&01&01&01&11\\
\hline 20&20&20&00&00&00\\
\end{array} \right),
\quad
\left( \begin{array}{@{\,}c@{\,}c@{\,}c@{\,}c@{\,}c@{\,}|@{\,}c@{\,}|}
00&21&21&10&00&10\\
00&20&10&01&10&01\\
00&13&01&01&01&11\\
\hline 20&22&00&02&00&00\\
\end{array} \right),$$
$$ \left( \begin{array}{@{\,}c@{\,}c@{\,}c@{\,}c@{\,}c@{\,}|@{\,}c@{\,}|}
00&21&21&10&00&10\\
00&20&10&01&10&01\\
00&13&01&01&01&11\\
\hline 20&02&20&02&00&00\\
\end{array} \right),
\quad
\left( \begin{array}{@{\,}c@{\,}c@{\,}c@{\,}c@{\,}c@{\,}|@{\,}c@{\,}|}
00&21&21&10&00&10\\
00&20&10&01&10&01\\
00&32&03&01&01&10\\
\hline 20&22&20&00&00&00\\
\end{array} \right),
\quad
 \left( \begin{array}{@{\,}c@{\,}c@{\,}c@{\,}c@{\,}c@{\,}|@{\,}c@{\,}|}
00&21&21&10&00&10\\
00&20&10&01&10&01\\
00&32&03&01&01&10\\
\hline 20&02&20&02&00&00\\
\end{array} \right),$$
$$ \left( \begin{array}{@{\,}c@{\,}c@{\,}c@{\,}c@{\,}c@{\,}|@{\,}c@{\,}|}
00&21&21&10&00&10\\
00&31&20&01&10&00\\
00&21&33&01&01&01\\
\hline 20&22&20&00&00&00\\
\end{array} \right),
\quad
 \left( \begin{array}{@{\,}c@{\,}c@{\,}c@{\,}c@{\,}c@{\,}|@{\,}c@{\,}|}
00&21&10&10&10&10\\
00&31&01&01&01&01\\
\hline 00&20&02&20&00&00\\
00&02&22&02&00&00\\
20&02&20&00&00&00\\
\end{array} \right),
\quad
\left( \begin{array}{@{\,}c@{\,}c@{\,}c@{\,}c@{\,}c@{\,}|@{\,}c@{\,}|}
00&21&10&10&10&10\\
00&12&11&01&01&01\\
\hline 00&20&02&20&00&00\\
00&22&20&02&00&00\\
20&02&20&00&00&00\\
\end{array} \right).$$
\caption{Lengthenings of type $\ZZ_4^3\ZZ_2^1$ and type $\ZZ_4^2\ZZ_2^3$}\label{t:32}
\end{table}
\begin{table}
$$
 \left( \begin{array}{@{\,}c@{\,}c@{\,}c@{\,}c@{\,}c@{\,}|@{\,}c@{\,}|}
00&21&10&10&10&10\\
00&31&01&01&01&01\\
21&11&11&10&00&01\\
\hline 00&20&02&20&00&00\\
00&02&22&02&00&00\\
\end{array} \right),
\quad
  \left( \begin{array}{@{\,}c@{\,}c@{\,}c@{\,}c@{\,}c@{\,}|@{\,}c@{\,}|}
00&21&10&10&10&10\\
00&12&11&01&01&01\\
21&11&11&10&00&01\\
\hline 00&20&02&20&00&00\\
00&22&20&02&00&00\\
\end{array} \right),
\quad
\left( \begin{array}{@{\,}c@{\,}c@{\,}c@{\,}c@{\,}c@{\,}|@{\,}c@{\,}|}
00&21&21&10&00&10\\
00&20&10&01&10&01\\
00&12&01&01&01&01\\
\hline 20&22&20&00&00&00\\
02&20&00&02&00&00\\
\end{array} \right)$$
$$
\left( \begin{array}{@{\,}c@{\,}c@{\,}c@{\,}c@{\,}c@{\,}|@{\,}c@{\,}|}
00&21&21&10&00&10\\
00&20&10&01&10&01\\
00&13&01&01&01&11\\
\hline 20&20&20&00&00&00\\
02&22&00&02&00&00\\
\end{array} \right),
\quad
 \left( \begin{array}{@{\,}c@{\,}c@{\,}c@{\,}c@{\,}c@{\,}|@{\,}c@{\,}|}
00&21&21&10&00&10\\
00&20&10&01&10&01\\
00&32&03&01&01&10\\
\hline 20&22&20&00&00&00\\
02&20&00&02&00&00\\
\end{array} \right),
\quad
  \left( \begin{array}{@{\,}c@{\,}c@{\,}c@{\,}c@{\,}c@{\,}|@{\,}c@{\,}|}
00&21&21&10&00&10\\
00&31&20&01&10&00\\
00&21&33&01&01&01\\
\hline 20&22&20&00&00&00\\
02&22&02&02&00&00\\
\end{array} \right),$$
%
$$ \left( \begin{array}{ccccc|c|}
00&21&10&10&10&10\\
00&31&01&01&01&01\\
\hline 00&20&02&20&00&00\\
00&02&22&02&00&00\\
20&02&20&00&00&00\\
02&22&02&00&00&00\\
\end{array} \right),
\qquad
 \left( \begin{array}{ccccc|c|}
00&21&10&10&10&10\\
00&12&11&01&01&01\\
\hline 00&20&02&20&00&00\\
00&22&20&02&00&00\\
20&02&20&00&00&00\\
02&22&02&00&00&00\\
\end{array} \right).$$
\caption{Lengthenings of type $\ZZ_4^3\ZZ_2^2$ and type $\ZZ_4^2\ZZ_2^4$}\label{t:42}
\end{table}

\begin{proposition}\label{p:doob11}
In Doob graphs of diameter~$11$, there are no additive codes of size~$1024$ with weights $6$, $8$, $10$.
\end{proposition}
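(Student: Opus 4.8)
The plan is to convert the nonexistence claim into a bounded computer search, using shortening to descend to the already classified diameter-$9$ situation. Suppose, for contradiction, that $C_{11}$ is an additive code of size $2^{10}$ with all nonzero weights in $\{6,8,10\}$ in a Doob graph $D(m,n'+n'')$ with $m>0$ and $2m+n'+n''=11$. The first step is to shorten $C_{11}$ in coordinates that lower the diameter by~$2$: either in one of the $m$ Shrikhande-type coordinates, or in two of the $n'+n''$ remaining ones (the latter is available whenever $n'+n''\ge 2$). The ambient diameter then drops to~$9$, so no weight-$10$ word can survive the shortening, and the resulting code $C_9$ has all nonzero weights in $\{6,8\}$. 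Since shortening divides the cardinality by at most $2^{4}$, we get $|C_9|\ge 2^{10}/2^{4}=64$, and replacing $C_9$ by a subgroup of order exactly~$64$ (which only deletes codewords, hence keeps all nonzero weights in $\{6,8\}$) lands us in the classification of Section~\ref{s:srg} and Appendix~\ref{s:all}: the only diameter-$9$ Doob graphs carrying a weight-$\{6,8\}$ code of size~$64$ are $D(4,1+0)$, $D(3,0+3)$ and $D(0,9+0)=H(9,4)$.

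The next step is a short case check over $(m,n',n'')$. Shortening at a Shrikhande coordinate puts $C_9$ into $D(m-1,n'+n'')$, so $m-1\in\{4,3,0\}$, i.e., $m\in\{5,4,1\}$. When $m\in\{1,4\}$ one has $n'+n''=11-2m\ge 2$, so one may instead shorten two of the last coordinates, which keeps the number of Shrikhande factors equal to~$m$; for $m=1$ the resulting diameter-$9$ graph is none of the three listed above, and for $m=4$ the Shrikhande shortening forces $(n',n'')=(0,3)$ whereas shortening two of the three last coordinates would then land in $D(4,0+1)$, which carries no weight-$\{6,8\}$ code of size~$64$ --- both contradictions. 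Hence $m=5$, $n'+n''=1$, and matching the Shrikhande shortening to $D(4,1+0)$ forces $n'=1$, $n''=0$. So any hypothetical $C_{11}$ lives in $D(5,1+0)$, and shortening it at the Shrikhande coordinate (followed, if necessary, by passing to an order-$64$ subgroup) exhibits one of the six codes $B_1,\ldots,B_6$ of Table~\ref{t:410}, embedded in $D(5,1+0)$ by prepending a zero Shrikhande coordinate, as a subcode of $C_{11}$.

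The last step is the enumeration inside $D(5,1+0)$. Since $C_{11}$ contains some $B_i$, and every additive code between $B_i$ and $C_{11}$ is itself a subcode of $C_{11}$ and so again has all nonzero weights in $\{6,8,10\}$, the code $C_{11}$ is reachable from $B_i$ by a chain of single-generator extensions that never leaves the class of codes with nonzero weights in $\{6,8,10\}$. It therefore suffices to run, starting from each of $B_1,\ldots,B_6$, the breadth-first search that repeatedly adjoins a generator in all possible ways, discards every code having a nonzero weight outside $\{6,8,10\}$, and keeps one representative per equivalence class. This search terminates; as recorded by the check matrices in Tables~\ref{t:32} and~\ref{t:42}, it produces additive codes only of sizes~$128$ and~$256$ (with weight distributions $\{0^1,6^{42},8^{55},10^{30}\}$ and $\{0^1,6^{54},8^{111},10^{90}\}$), and none of size~$1024$, which is the desired contradiction.

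I expect the main obstacle to be the completeness of this final enumeration together with the structural facts it relies on: one must be sure that the diameter-$9$ classification of Section~\ref{s:srg} (equivalently Appendix~\ref{s:all}) really lists \emph{all} weight-$\{6,8\}$ codes of size~$64$, and that restricting the $D(5,1+0)$ search to codes containing some $B_i$ loses no code of size~$1024$ --- which is precisely what the shortening argument of the first step guarantees. By comparison, the case analysis over $(m,n',n'')$ and the remark that a weight-$10$ word cannot survive a diameter-dropping shortening are routine, but they are the glue that makes the reduction to a finite search valid.
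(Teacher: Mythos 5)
Your proof is correct and follows essentially the same route as the paper: shorten $C_{11}$ to diameter~$9$, invoke the classification of size-$64$ weight-$\{6,8\}$ codes to force the ambient graph to be $D(5,1+0)$ containing a copy of some $B_i$, and then rely on the exhaustive generator-by-generator lengthening search recorded in Tables~\ref{t:32} and~\ref{t:42}. Your explicit case analysis over $(m,n',n'')$ (in particular ruling out $m=1$ and $(4,0,3)$ via a second shortening) and your subgroup-chain justification of the search's completeness merely spell out steps the paper leaves implicit.
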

\begin{corollary}
 In Doob graphs $D(m,11-2m)$, $m>0$,
 there is no additive completely regular code~$C$ with intersection array
 $\{ 33, 30, 15; 1, 2, 15 \} $.
\end{corollary}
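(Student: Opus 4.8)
The plan is to derive the statement from Proposition~\ref{p:doob11} by passing to the dual code and reading off its weights from the intersection array.

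First I would note that $D(m,11-2m)$ has diameter $2m+(11-2m)=11$, so for $m>0$ it is exactly one of the Doob graphs to which Proposition~\ref{p:doob11} applies. Assume, towards a contradiction, that $C\subseteq D(m,11-2m)$ is an additive code that is completely regular with intersection array $I=\{33,30,15;1,2,15\}$. Since $C$ is additive, its being completely regular is equivalent to its coset graph $\Gamma_C$ (the quotient of $D(m,11-2m)$ by $C$) being distance regular, and then $\Gamma_C$ has intersection array $I$. The valency sequence $k_0,k_1,k_2,k_3$ of $\Gamma_C$ is computed from $I$ by $k_0=1$, $k_{i+1}=k_ib_i/c_{i+1}$, giving $1,33,495,495$, so $\Gamma_C$ has $1024$ vertices; equivalently $|C^{\perp}|=1024$, where $C^{\perp}$ is the dual of $C$ with respect to the inner product $\langle\cdot,\cdot\rangle_{-}$ of~\eqref{eq:ip}.

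Next I would identify the nonzero weights of $C^{\perp}$. The eigenvalues of a distance-regular graph with intersection array $I$ are the roots of the characteristic polynomial of its tridiagonal intersection matrix, which here has diagonal $(0,2,16,18)$, superdiagonal $(33,30,15)$ and subdiagonal $(1,2,15)$; one checks that this polynomial equals $(x-33)(x-9)(x-1)(x+7)$, so the spectrum of $\Gamma_C$ is $\{33,9,1,-7\}$. On the other hand $\Gamma_C$ is a quotient of $D(m,11-2m)$, whose eigenvalues are $\lambda_j=33-4j$ for $0\le j\le 11$ (the same as for $H(11,4)$), and the Delsarte-type correspondence that already underlies the ``CR-$2$ iff dual has two weights'' criterion recalled in Section~\ref{s:D} (see \cite[Theorem~6.6]{BesKro:Ch6}) says that $\lambda_j$ appears in the spectrum of $\Gamma_C$ if and only if $C^{\perp}$ contains a word of weight $j$. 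From $33=\lambda_0$, $9=\lambda_6$, $1=\lambda_8$, $-7=\lambda_{10}$ it follows that the nonzero weights of $C^{\perp}$ are precisely $6$, $8$, $10$. Hence $C^{\perp}$ is an additive code of size $1024$ with nonzero weights $6$, $8$, $10$ in a Doob graph of diameter $11$, which contradicts Proposition~\ref{p:doob11}; this completes the argument.

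The arithmetic here (the valency count, the factorization of the characteristic polynomial, and the matching with $\lambda_j=33-4j$) is routine. The one point that requires care, and which I would therefore single out rather than reprove, is the validity of the weight--spectrum correspondence in the Doob setting: the duality is taken with respect to the nonstandard pairing~\eqref{eq:ip}, not the ordinary dot product, and one must know that the eigenvalue $\lambda_j$ of the coset graph $\Gamma_C$ occurs exactly when $C^{\perp}$ has a weight-$j$ codeword. This is the same mechanism that produces the CR-$2$ criterion quoted earlier, so I would invoke it in that form and proceed.
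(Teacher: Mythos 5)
Your proposal is correct, and it reaches the same contradiction with Proposition~\ref{p:doob11} as the paper does, but by a genuinely different intermediate route. The paper argues: since $D(m,11-2m)$ and $H(11,4)$ have the same parameters as distance-regular graphs and the given intersection array is that of the punctured dodecacode $D^-$ in $H(11,4)$, the weight distribution of $C$ must coincide with that of $D^-$ (citing \cite[Proposition 1.21]{KroPot:CRC&EP}), and then the MacWilliams transform \cite[Theorem~6.6]{BesKro:Ch6} transfers this to the duals, giving $|C^{\perp}|=1024$ with weights $6,8,10$. You instead bypass the punctured dodecacode entirely: you recover the size of $C^{\perp}$ from the valency sequence $1,33,495,495$ of the coset graph, compute the spectrum $\{33,9,1,-7\}$ from the tridiagonal intersection matrix, and read off the dual weights $6,8,10$ via the Delsarte eigenvalue--weight correspondence $\lambda_j=33-4j$ (your arithmetic checks out, including the trace check $0+2+16+18=36=33+9+1-7$). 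Your route is more self-contained in that it needs no reference code with the same parameters, only the standard fact that for an additive code in a translation scheme the coset-graph eigenvalues correspond to the dual weights -- which, as you note, is the same machinery behind the CR-$2$ criterion and the cited MacWilliams theorem for the pairing~\eqref{eq:ip}; the paper's route is shorter given the two cited results and yields the full weight distribution of $C$ rather than only the dual weights, though only the latter are needed. One point worth making explicit in your write-up: the hypothesis $c_1=1$ forces the minimum distance of $C$ to be at least $3$ (a weight-$2$ codeword would give a vertex with two neighbours in $C$), which is what guarantees that the coset graph is a simple distance-regular graph with the stated intersection array.
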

\begin{proof}
 Since the given intersection array is the same as that of the punctured
 dodecacode~$D^-$ in $H(11,4)$ and since the graphs $D(m,11-2m)$ and $H(11,4)$ have the same parameters as distance-regular graphs, the weight distributions of~$C$ and~$D^-$ coincide,
 see, e.g., \cite[Proposition 1.21]{KroPot:CRC&EP}. The weight distributions of the dual codes~$C^\perp$ (with respect to the inner product~\eqref{eq:ip})
 and~$D^{-\perp}$ also coincide by the MacWilliams transform~\cite[Theorem~6.6]{BesKro:Ch6}.
\end{proof}
\begin{proposition}\label{p:doob12}
In Doob graphs of diameter~$12$, there are no additive codes of size~$4096$ with weights $6$, $8$, $10$, $12$.
\end{proposition}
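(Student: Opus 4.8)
The plan is to follow the proof of Proposition~\ref{p:doob11}: shorten a hypothetical code down to one of the already-classified situations, and then run an exhaustive lengthening search on the few surviving host graphs. So suppose $C$ is an additive code of size~$4096$ in $D(m,n'+n'')$ with $m>0$, $2m+n'+n''=12$, all of whose nonzero weights lie in $\{6,8,10,12\}$. The starting observation is that a weight-$12$ codeword of $D(m,n'+n'')$ is nowhere zero: maximality of the weight forces each of the first $m$ symbols to have weight~$2$ and each of the remaining $n'+n''$ symbols to be nonzero. Hence shortening $C$ in any single coordinate removes every weight-$12$ codeword, leaving a code whose nonzero weights lie in $\{6,8,10\}$, and the projection of $C$ to each coordinate is nontrivial. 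Recall that shortening in one of the last $n'+n''$ coordinates decreases the diameter by~$1$ and the size by a factor in $\{1,2,4\}$, while shortening in one of the first $m$ coordinates decreases the diameter by~$2$ and the size by a factor in $\{1,2,4,8,16\}$.

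First I would treat every diameter-$12$ Doob graph other than $D(6,0)$, i.e.\ those with $n'+n''>0$. Shortening $C$ once in one of the last coordinates produces a code $C'$ in the proper Doob graph $D(m,(n'+n''-1))$ of diameter~$11$, with nonzero weights in $\{6,8,10\}$ and $|C'|\in\{1024,2048\}$ (nontriviality of the projection rules out $|C'|=4096$, and the factor is at least $1/4$). If $|C'|=1024$, then Proposition~\ref{p:doob11} --- together with the fact that for any proper subset of $\{6,8,10\}$ the associated two-weight or one-weight parameters in a diameter-$11$ space are infeasible --- yields a contradiction. If $|C'|=2048$, then $C'$ meets the shortened coordinate only in an order-$2$ subgroup, so $C^{\perp}$ has a word of weight~$1$; I would dispose of this degenerate alternative by the usual argument that $C$ then effectively carries a binary coordinate and hence reduces to a strictly smaller ambient configuration already covered, or, in the non-degenerate case, by noting that the coset graph of ${C'}^{\perp}$ is a distance-regular graph of diameter at most~$3$ whose restricted eigenvalues $9,1,-7$ are dictated by the weights $6,8,10$, so it has intersection array $\{33,30,15;1,2,15\}$ and exactly $1024$ vertices.

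There remains $D(6,0)$, which has no last-type coordinate. Here one shortens twice in the Shrikhande part, landing in $D(4,0)$ of diameter~$8$ with nonzero weights in $\{6,8\}$; a two-weight code with dual distance at least~$3$ in a diameter-$8$ space has coset-graph restricted eigenvalues $0$ and $-8$, strongly regular parameters $(32,24,16,24)$, and thus size exactly~$32$, which, together with the analogous rigidity for the intermediate diameter-$10$ graph $D(5,0)$, pins $C$ down to a short explicit list of configurations (or eliminates it outright). Finally, for each host graph surviving the reductions above --- in practice $D(5,1+0)$, the direct analogue of the punctured case, plus the handful coming from the $D(6,0)$ branch --- I would run the incremental search of Section~\ref{s:D11}: embed each relevant classified small code by prepending all-zero coordinates up to diameter~$12$, repeatedly adjoin a generator in every possible way, retain only codes whose nonzero weights stay in $\{6,8,10,12\}$, reduce modulo equivalence, and iterate until closure. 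The search terminates without ever producing a code of size~$4096$; since, by the reductions, every code as in the statement admits a shortening of the identified type, this proves the proposition.

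The step I expect to be the main obstacle is the bookkeeping around degeneracy and parity: making sure that after each shortening the dual distance remains at least~$3$, so that the coset-graph / strongly-regular-graph rigidity may be invoked, and handling $D(6,0)$ --- the only Shrikhande-only diameter-$12$ Doob graph --- where the parity of Shrikhande shortenings forbids a direct reduction to the diameter-$9$ or diameter-$11$ classifications and forces the detour through diameter~$8$. By contrast, once the finitely many host graphs have been identified, the lengthening search itself is routine.
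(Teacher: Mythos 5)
Your reduction for the graphs $D(m,12-2m)$ with $0<m<6$ is essentially the paper's: shorten in a $K_4$-type coordinate and invoke Proposition~\ref{p:doob11}. (Your worry about the shortened code having size $2048$ rather than $1024$ is legitimate bookkeeping, but the paragraph you offer to dispose of it --- a weight-one dual word ``reducing to a smaller configuration'', or a coset graph with eigenvalues $9,1,-7$ --- is not an argument; it would need to be made precise. The paper is terse here too, but at least states the reduction cleanly.)

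The genuine gap is $D(6,0)$, which is the substantive part of the proof, and your treatment of it does not work. Shortening twice in the Shrikhande part lands you in $D(4,0)$ with a code whose size is only bounded below by $4096/256=16$, and there is no classification of weight-$\{6,8\}$ codes in a diameter-$8$ Doob graph anywhere in the paper (the Appendix classifies diameter~$9$); the strongly regular parameters $(32,24,16,24)$ you quote are invented, and the claimed ``rigidity'' that would pin the code down to a short list is unsupported. So your $D(6,0)$ branch never reaches a finite, checkable computation. The paper's actual argument for $D(6,0)$ is entirely different and is the key idea you are missing: the $2$-torsion subcode of a type-$\ZZ_4^\delta\ZZ_2^\gamma$ code has Doob weights that are twice Hamming weights, hence is an additive $(6,2^{\gamma+\delta},\ge 3)_4$ code with $\gamma+\delta\ge 6$; concatenating $\F_4\to\F_2^3$ turns these into binary $(18,2^k,\{6,8,10,12\})$ codes, which are classified by computer ($646$ quaternary classes for $k=6$, $14$ for $k=7$, none for $k=8$), and one then shows by an explicit row-by-row (respectively codeword-by-codeword) lifting argument that none of these $2$-torsion candidates extends to a full code of type $\ZZ_4^6$ or $\ZZ_4^5\ZZ_2^2$. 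Without some substitute for this torsion-plus-concatenation step, your proposal does not prove the proposition.
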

\begin{proof}
For the graph $D(m,12-2m)$, $0<m<6$, the claim follows from Proposition~\ref{p:doob11} (indeed, if we have a code with required parameters, then shortening it in the last coordinate leads to a  code of size~$1024$ in $D(m,11-2m)$, which does not exist).

For the case $m=6$,
we consider an additive code of type
$\ZZ_4^\delta\ZZ_2^\gamma $ in $D(6,0)$, with weights
$6$, $8$, $10$, $12$. The type-$\ZZ_2^{\gamma+\delta}$ subcode has Doob distance at least~$6$, and it is equivalent to an additive $(6,2^{\gamma+\delta},3)_4$ code, a $4$-ary code of length~$6$. In its turn, by concatenation (mapping the $4$ quaternary symbols to the binary triples $000$, $011$, $101$, $110$), each of such codes can be mapped to a binary linear $(18,2^{k},\{6,8,10,12\})$ code, $k=\gamma+\delta$. Such codes are classified with the software \cite{QextNewEdition}:
there are $2859$, $258$, and $3$ equivalence classes of such codes for $k=6,7,8$, respectively.
By inverse concatenation, we get $646$ equivalence classes of additive $(6,2^{6},3)_4$ codes
(including one distance-$4$ code, known as the hexacode), see Appendix~\ref{a:d3k6},
$14$ equivalence classes of additive $(6,2^{7},3)_4$ codes, see Appendix~\ref{a:d3k7}, and no additive $(6,2^{8},3)_4$ codes
(only $626$ of the $2859$ binary $(18,2^{6},\{6,8,10,12\})$ codes can be represented as quaternary by concatenation; on the other hand, some codes can be represented in more than one way;
that is why we say the number of equivalence classes of quaternary codes is larger than $626$).
So, now we know all additive distance-$6$ codes of type $\ZZ_2^k$, $k\ge 6$ in $D(6,0)$.
To confirm the results, we have made a search similar to that in Section~\ref{s:part},
see Table~\ref{t:d3}.

\begin{table}
 \begin{alignat*}{5}
\begin{array}{c|*{13}{c}}
  n\backslash k &0 &1 & 2 & 3 & 4 & 5 & 6 &7 &8 \\\hline 
   3  & 1 & 1 & 1  & 0 \\
   4  & 1 & 2 & 5  & 3  & 1  & 0  \\
   5  & 1 & 3 & 14 & 32  & 40    & 9    & 1   & 0                \\
   6  & 1 & 4 & 30 & 181 & 885   & 1660 & 646 & 14          & 0 & \\
\end{array}
\end{alignat*}
\caption{The number $N(n,k,3)$ of equivalence classes of additive $(n,2^k,{\ge} 3)_4$ codes.}\label{t:d3}
\end{table}

In the case $k=6$, we have $\gamma=0$ and $\delta=6$,
and for a putative additive code $C$ of type
$\ZZ_4^6$ in $D(6,0)$ its type-$\ZZ_4^2$ subcode is exactly $2C$. So, we can start with a generator matrix
of $2C$ (which comes from a binary generator matrix of an additive $(6,2^{6},3)_4$ code by replacing $1$s by $2$s) and try to lift it row-by-row.
From the $646$ additive $(6,2^{6},3)_4$ codes we found, there are only two codes such that any row of the generated matrix (but not all six rows) can be lifted, with (additive) generator matrices
$$
\left(
\begin{array}{c@{\,}cc@{\,}cc@{\,}cc@{\,}cc@{\,}cc@{\,}c}
   0&1& 0&0& 1&0& 0&0& 0&0& 1&0 \\
   0&0& 0&0& 0&0& 1&0& 1&0& 1&0 \\
   0&0& 0&0& 0&1& 0&1& 0&0& 0&1 \\
   0&0& 0&1& 0&0& 0&1& 0&1& 0&0 \\
   1&0& 0&0& 1&1& 0&1& 0&1& 1&0 \\
   0&0& 1&0& 1&0& 0&0& 1&0& 1&0
\end{array}\right),
\qquad
\left(\begin{array}{c@{\,}cc@{\,}cc@{\,}cc@{\,}cc@{\,}cc@{\,}c}
   1&0& 0&1& 0&1& 0&1& 0&1& 1&1 \\
   0&0& 1&0& 0&1& 0&1& 0&0& 0&1 \\
   0&0& 0&1& 1&0& 0&0& 0&1& 0&1 \\
   0&0& 0&0& 0&1& 1&0& 0&1& 1&0 \\
   0&0& 0&1& 0&0& 0&1& 1&0& 1&0 \\
   0&1& 0&0& 0&0& 0&1& 0&1& 0&1
\end{array}\right)
$$
(the second code has distance~$4$ and is equivalent to the well-known hexacode).
By lifting row-by-row (and keeping only inequivalent codes at each step), we find that for each of the two matrices, it is not possible to lift the first $3$ rows to produce a code of type $\ZZ_4^3\ZZ_2^3$ in $D(6,0)$. It follows that a distance-$6$ even-weight code of type $\ZZ_4^6$ does not exist in~$D(6,0)$.

For the type $\ZZ_4^5\ZZ_2^2$, it is slightly more complicated. The reason is that $2C$ is of type $\ZZ_2^5$, which is not the whole type-$\ZZ_2^7$ subcode of~$C$. So, if we have a putative type-$\ZZ_2^7$ subcode
(from a $4$-ary additive $(6,2^{7},3)_4$ code)
where the rows of some concrete generator matrix cannot be lifted, this does not lead to a direct contradiction.
The strategy is to try to lift each nonzero codeword of the putative type-$\ZZ_2^7$ subcode.
For each of the $14$ candidates $C$ to such a subcode, obtained from the $14$ additive $[6,3.5,3]_4$ codes by replacing $1$s by $2$s,
we try to lift each nonzero codeword. A codeword $b$
is called \emph{liftable} if there is another word $c$ such that $b-2c$
and the additive code obtained as the additive closure of $C\cup\{c\}$
has only (Doob) weights $0$, $6$, $8$, $10$, $12$ (not necessarily all of them). Clearly, if there is a type-$\ZZ_4^5\ZZ_2^2$ code with required weights, then its type-$\ZZ_2^7$ subcode must have at least $31 = 2^5-1$ liftable codewords. However, among the $14$ candidates on such a subcode,
$4$ codes have only $7$ liftable codewords and $10$ codes have only $1$ liftable codeword. This means that a required code does not exist.
\end{proof}

\section{Conclusion and open problems}
We have shown that the punctured dodecacode is characterized by its weight distribution, and that it is not $\F_4$-linear. It is the case $m=2$ of an infinite family of putative uniformly packed codes introduced in \cite{BZZ:1974:UPC}. The case $m=3$ of that family would lead to consider a dual code $D$ of binary dimension $2(2m+1)=2\times 7=14$, and length $43.$
Its weight distribution would be, in Magma notation:
$$
\texttt{[ <0, 1>, <28, 3612>, <32, 8127>, <36, 4644> ]}.
$$
While this code would still be even and trace Hermitian self-orthogonal, the classification of self-orthogonal codes either for the Hermitian or trace Hermitian codes are far beyond the tables in \cite{BouOst:2005} and \cite{DanPar:2006}. A computer search for an analogue of the dodecacode in length~$44$ with a cyclic automorphism group has failed to produce an example.

\section*{Acknowledgments}
The work of Markus Grassl is carried out under the `International Centre for Theory of Quantum Technologies 2.0: R\&D Industrial and Experimental Phase' project (contract no.~FENG.02.01-IP.05-0006/23). The project is implemented as part of the International Research Agendas Programme of the Foundation for Polish Science, co-financed by the European Funds for a Smart Economy 2021-2027 (FENG), Priority FENG.02 Innovation-friendly environment, Measure FENG.02.01.
The work of D.~Krotov is supported  within the framework of the state contract of the Sobolev Institute of Mathematics (FWNF-2026-0011).
Lin Sok is supported by Nanyang Technological University Research Grant No. 04INS000047C230GRT01.


\providecommand\href[2]{#2} \providecommand\url[1]{\href{#1}{#1}}
  \def\DOI#1{{\href{https://doi.org/#1}{https://doi.org/#1}}}\def\DOIURL#1#2{{\href{https://doi.org/#2}{https://doi.org/#1}}}

\raggedbottom
\appendix
\section{\texorpdfstring{All weight-$\{6,8\}$ codes in $D(m,n)$ of diameter $9$}{All weight-\{6,8\} codes in D(m,n) of diameter 9}}
\label{s:all}

Here, we list generator matrices of all, up to equivalence, weight-$\{6,8\}$ codes in $D(m,n'+n'')$, $2m+n'+n''=9$, $2n+n''\ge 6$.
In a matrix, two vertical lines in separate the three groups of coordinates, of size $m$, $n'$, and $n''$, while the horizontal line separates the rows of order $4$ and rows of order-$2$.

\subsection{\texorpdfstring{In $D(4,1+0)$}{In D(4,1+0)}}
\begin{small}\raggedright

Size-$2$ codes of type $\ZZ_4^0\ZZ_2^1$:\\
$ 1:$~$\left( 
\right).$$
\end{small}

\end{document}